\newtheorem{thm}{Theorem}[section]
\newcommand{\bthm}{\begin{thm}} \newcommand{\ethm}{\end{thm}}
\newtheorem{prop}[thm]{Proposition}
\newcommand{\bprp}{\begin{prop}} \newcommand{\eprp}{\end{prop}}
\newtheorem{fact}[thm]{Fact}
\newcommand{\bfct}{\begin{fact}} \newcommand{\efct}{\end{fact}}
\newtheorem{prob}[thm]{Problem}
\newcommand{\bprb}{\begin{prob}} \newcommand{\eprb}{\end{prob}}
\newtheorem{quest}[thm]{Question}
\newcommand{\bqtn}{\begin{quest}} \newcommand{\eqtn}{\end{quest}}
\newtheorem{lem}[thm]{Lemma}
\newcommand{\blem}{\begin{lem}} \newcommand{\elem}{\end{lem}}
\newtheorem{claim}[thm]{Claim}
\newcommand{\bclm}{\begin{claim}} \newcommand{\eclm}{\end{claim}}
\newtheorem{cor}[thm]{Corollary}
\newcommand{\bcor}{\begin{cor}} \newcommand{\ecor}{\end{cor}}
\newtheorem{conj}[thm]{Conjecture}
\newcommand{\bcnj}{\begin{conj}} \newcommand{\ecnj}{\end{conj}}
\theoremstyle{definition}
\newtheorem{defn}[thm]{Definition}
\newcommand{\bdfn}{\begin{defn}} \newcommand{\edfn}{\end{defn}}
\newtheorem{spec}[thm]{Specializing}
\newcommand{\bspc}{\begin{spec}} \newcommand{\espc}{\end{spec}}
\theoremstyle{remark}
\newtheorem{rem}[thm]{Remark}
\newcommand{\brem}{\begin{rem}} \newcommand{\erem}{\end{rem}}
\newtheorem{cnv}[thm]{Convention}
\newcommand{\bcnv}{\begin{cnv}} \newcommand{\ecnv}{\end{cnv}}
\newtheorem{exam}[thm]{Example}
\newcommand{\bexm}{\begin{exam}} \newcommand{\eexm}{\end{exam}}
\newcommand{\bpf}{\begin{proof}} \newcommand{\epf}{\end{proof}}
\newcommand{\sA} {{\mathcal A}}
\newcommand{\sB} {{\mathcal B}}
\newcommand{\sD} {{\mathcal D}}
\newcommand{\sG} {{\mathcal G}}
\newcommand{\sS} {{\mathcal S}}
\renewcommand{\phi}{\varphi}
\renewcommand{\theta}{\vartheta}
\newcommand{\A}{{\rm A}}
\newcommand{\ga}{{\alpha}}
\newcommand{\gb}{{\beta}}
\newcommand{\E}{{\rm E}}
\newcommand{\mkp}{\medskip}
\def\defi{\buildrel\rm def \over=}
\begin{document}

\title[Representation of group isomorphisms]{Representation of group isomorphisms. The compact case}

\author[M. Ferrer]{Marita Ferrer}
\address{Universitat Jaume I, Instituto de Matem\'aticas de Castell\'on,
Campus de Riu Sec, 12071 Castell\'{o}n, Spain.}
\email{mferrer@mat.uji.es}

\author[M. Gary]{Margarita Gary}
\address{Departamento de Matem\'aticas, Universidad Aut\'onoma Metropolitana, Iztapalapa, M\'exico DF, M\'exico}
\email{margaritagary1@hotmail.com}

\author[S. Hern\'andez]{Salvador Hern\'andez}
\address{Universitat Jaume I, Departamento de Matem\'{a}ticas,
Campus de Riu Sec, 12071 Castell\'{o}n, Spain.}
\email{hernande@mat.uji.es}

\thanks{ The first and third listed
authors acknowledge partial support by the Generalitat Valenciana,
grant code: PROMETEO/2014/062; and by Universitat Jaume I, grant P1·1B2012-05}
%\vspace{1cm}

\begin{abstract}
Let $G$ be a discrete group and let  $\mathcal A$  and $\mathcal B$  be two subgroups of $G$-valued
continuous functions  defined on two $0$-dimensional compact spaces $X$ and $Y$. A group isomorphism
$H$ defined between $\sA$ and $\sB$ is called \emph{separating} when for each pair of maps
$f,g\in \sA$ satisfying that $f^{-1}(e_G)\cup g^{-1}(e_G)=X$, it holds that $Hf^{-1}(e_G)\cup Hg^{-1}(e_G)=Y$.
We prove that under some mild conditions every separating isomorphism
$H:\mathcal A\longrightarrow \mathcal B$ can be represented by means of a continuous function
$h: Y\longrightarrow X$ as a weighted composition operator. As a consequence we establish the equivalence of
two subgroups of continuous functions if there is a biseparating isomorphism defined between them.
\end{abstract}

\thanks{{\em 2010 Mathematics Subject Classification.} Primary 43A40. Secondary 22A25, 22C05, 22D35, 43A35, 43A65, 54H11\\
{\em Key Words and Phrases:} Group-valued continuous function, separating map, pointwise convergence topology, weighted composition operator.}

%\dedicatory{}

\date{24 October 2014}

\maketitle \setlength{\baselineskip}{24pt}

\section {Introduction}

Let $G$ be a discrete group and let $X$ and $Y$ be topological spaces. If $\sA$ and $\sB$ are
groups of $G$-valued continuous maps, we say that $\sA$ and $\sB$ are \emph{equivalent}
when there is a homeomorphism $h\colon Y\to X$ and a continuous map $\omega\colon Y\to \A ut (G)$
satisfying that $Hf(y)=\omega[y](f(h(y)))$ for all $y\in Y$, where $\A ut(G)$ is equipped with the
pointwise convergence topology. We say in this case that $H$ is represented as a \emph{weighted composition
operator}.
There are many results that are concerned with the representation of linear operators as weighted composition
maps and the equivalence of specific groups of continuous functions in the literature, which is
vast in this regard. We will only mention here the classic Banach-Stone Theorem that, when $G$
is the field of real or complex numbers, establishes that if the Banach spaces of continuous
functions $C(X,G)$ and $C(Y,G)$ are isometric, then they are equivalent and the isometry can be
represented as a weighted composition map
(cf. \cite{Ara_Jar:2003,Banach,font-her,font-her2,gau-jeang-wong,gonzalez-uspenskij,SHdz:Houston,her-bec-nar,jarosz,Stone}).
Another important example appears in coding theory, where the well known MacWilliams Equivalence
Theorem asserts that, when $G$ is a finite field and $X$ and $Y$ are finite sets, two codes
(linear subspaces) $\sA$ and $\sB$ of $G^X$ and $G^Y$, respectively, are equivalent when they are
isometric for the
Hamming metric (see \cite{McW:i,McW:ii,Din_LP:2004}).
This result has been generalized to convolutional codes in \cite{GluLue:2009} and it also
makes sense in other areas, as for example functional analysis and linear dynamical systems
(cf \cite{Fer_Her_Sha:Products,forney_trott:04,GluLue:2009,rosenthal,willems:86}).
The main motivation of this research has been to extend MacWilliams Equivalence Theorem to more general
settings and explore the possible application of these methods to the study of convolutional codes
or linear dynamical systems. However, throughout this paper, we will only deal with $0$-dimensional
compact spaces $X$ and $Y$, and a discrete group $G$. We will look at the possible application of this
abstract approach elsewhere. There are many precedents in the study the representation of group
homomorphisms for group-valued continuous functions. Among them, the following ones are relevant here
(cf \cite{eda kiyosa ohta,Fer_Her_Rod,her_rod:2007,martinez,ohta 96,tesis-rodenas,yang-1,yang}).
Most basic facts and notions related to topological properties may be
found in \cite{engel}.

\mkp

Throughout this paper all spaces are assumed to be Hausdorff 0-dimensional and compact.
If $X$ is a topological space and $G$ is a topological (discrete) group, we denote by
$C(X,G)$ the group of continuous functions from $X$ to $G$. Let $e_G$ be the neutral element of $G$.
For $f\in C(X,G)$ the \emph{cozero} of $f$ is the set $coz(f)=\{x\in X : f(x)\neq e_G\}$ and
the \emph{zero} of $f$ is the set $Z(f)=X\setminus coz(f)$. Since $G$ is discrete $coz(f)$ and
$Z(f)$ are both closed and open (\emph{clopen}) subsets of $X$.
\mkp
%We say that $\A$ \textit{separates the points of} $X$ if for every two distinct elements  $x_1,x_2\in X$
%there is $f\in\A$ such that $f(x_1)\neq f(x_2)$.
%We say that $\A$ \textit{strongly separates the points of} $X$ if for every two distinct elements
%$x_1,x_2\in X$ there is $f\in\A$ such that $x_1\in coz(f)$ and $x_2\in Z(f)$.

Let $\sA$ be a subgroup of $C(X,G)$ and set $Z(\sA)\defi \{Z(f) : f\in\sA\}$. Then
$\sigma(Z(\sA))$ denotes the minimum collection of subsets containing $Z(\sA)$
that is closed under finite unions and intersections (resp. $coz(\sA)\defi \{coz(f) : f\in\sA\}$
and $\sigma(coz(\sA))$ denotes the minimum collection of subsets containing $coz(\sA)$ that is closed
under finite unions and intersections). It is said that $\sA$ \textit{separates points in} $X$ if
for every pair  $(x_1,x_2)\in X\times X$ there is a map $f\in\sA$ such that $f(x_1)\not=e_G$ and $f(x_2)=e_G$.
%for every two distinct elements  $x_1,x_2\in X$ there is a map $f\in\sA$ such that $f(x_1)\not=f(x_2)$.
It is said that $\sA$ \emph{strongly separates points in} $X$ if
for every pair  $(x_1,x_2)\in X\times X$, there are maps $f_1,\, f_2\in\sA$ such that
$x_i\in coz(f_i), 1\leq i\leq 2$, and $coz(f_1)\cap coz(f_2)=\emptyset$.
\mkp

Denote by $\delta_x\colon\sA\to G$ the evaluation map,
that is, $\delta_x(f)=f(x)$ for every $f\in\sA$. It is said that $\sA$ is \emph{pointwise dense} when
$\delta_x(\sA)$ is dense in $G$ for all $x\in X$. %It is said that $\sA$ is \emph{faithful} when
%$X=\bigcup \{coz(f) : f\in\sA\}$. Finally,
It is said that
$\sA\subseteq C(X,G)$ is \textit{controllable}  if
%\begin{enumerate}
%\item [(C)]
for every $f\in\mathcal{A}$ and $D_1,D_2\in\sigma(Z(\sA))$ such that $D_1\cap D_2=\emptyset$ there exist a subset
$U\subseteq \sigma(coz(\sA))$ and a function $g\in \mathcal{A}$ such that $D_1\subseteq U\subseteq X\setminus D_2$,
$g|_{D_1}=f|_{D_1}$ and $g|_{Z(f)\cup (X\setminus U)}\equiv e_G$.
%\end{enumerate}
\mkp

We now formulate our main results.

\bthm
Let $X$ and $Y$ be $0$-dimensional compact Hausdorff spaces and let $G$ be a discrete group.
Suppose that $\sA$ and $\sB$ are controllable and pointwise dense subgroups of $G$-valued
continuous functions separating
the points of $X$ and $Y$, respectively.
%that are controllable, pointwise dense, and separate points in $X$ and $Y$, respectively.
If $H\colon\sA \to\sB$ is a biseparating group isomorphism of $\sA$ onto $\sB$,
then there are continuous maps
$h\colon Y\to X$ and $\omega\colon Y\to \A ut (G)$ satisfying the following properties:
\begin{enumerate}
\item $h$ is a homeomorphism of\, $Y$ onto $X$.
\item For each $y\in Y$ and every $f\in \sA$ it holds $$Hf(y)=\omega[y](f(h(y))).$$
\item $H$ is an isomorphism with respect to the pointwise convergence topology.
\item  $H$ is an isomorphism with respect to the compact open topology.
\end{enumerate}
\ethm
\mkp

\bcor
Let $X$ and $Y$ be $0$-dimensional compact Hausdorff spaces and let $G$ be a discrete group.
Suppose that $\sA$ and $\sB$ are controllable and pointwise dense subgroups of $G$-valued
continuous functions separating
the points of $X$ and $Y$, respectively.
If there is a biseparating group isomorphism $H$ of $\sA$ onto $\sB$,
then $\sA$ and $\sB$ are equivalent.
\ecor
\mkp

We notice that some of the requirements we have imposed on the previous results could be relaxed in general.
However, this would take us to a wider setting in general. For instance, if we assume that $\sA$ does not
separate points in $X$, then there must be some point $x\in X$
such that $f(x)=e_G$ for all $f\in\sA$, then we can replace $X$ by the largest subspace $X_\sA\subseteq X$
where $\sA$ separates points. This subspace $X_\sA$ is open but not necessarily closed in general.
Thus, the study of subgroups that does not separate points lead us to consider locally compact spaces. We shall
discuss these spaces in a subsequent paper.

\section{Basic notions and facts}

The following lemma is easily verified using a standard compactness argument.
Recall that we are assuming that all spaces are compact and $0$-dimensional.

\begin{lem}\label{lema00} Let $\sD$ be a family of clopen subsets of $X$ that is a subbase for the closed subsets of $X$.
Then for every disjoint nonempty closed subsets $A$ and $B$ of $X$ there are two disjoint subsets $D_A$ and $D_B$ in $\sigma(\sD)$
such that $A\subseteq D_A$ and $B\subseteq D_B$.
\end{lem}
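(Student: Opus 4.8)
The plan is to reduce the statement to a single separation claim and then apply that claim twice. First I would unwind the hypothesis that $\sD$ is a subbase for the closed sets: this means that every closed subset of $X$ can be written as an intersection of finite unions of members of $\sD$; dually, $\{X\setminus D : D\in\sD\}$ is a subbase for the open sets. Since every member of $\sD$ is clopen, every finite union and every finite intersection of members of $\sD$ is again clopen, so in particular every element of $\sigma(\sD)$ is clopen.

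The key step is the following claim: \emph{if $P$ and $Q$ are disjoint nonempty closed subsets of $X$, then there is a clopen $D\in\sigma(\sD)$ with $Q\subseteq D$ and $P\cap D=\emptyset$.} To prove it I would write $Q=\bigcap_{i\in I}C_i$, where each $C_i$ is a finite union of members of $\sD$ (hence clopen). Because $P\cap Q=\emptyset$, the open sets $\{X\setminus C_i\}_{i\in I}$ cover the compact set $P$, so finitely many of them, say $X\setminus C_{i_1},\dots,X\setminus C_{i_n}$, already cover $P$. Then $D\defi\bigcap_{k=1}^{n}C_{i_k}$ is a finite intersection of finite unions of members of $\sD$, hence lies in $\sigma(\sD)$ and is clopen; moreover $Q\subseteq D$, while $P\subseteq\bigcup_{k=1}^{n}(X\setminus C_{i_k})=X\setminus D$, so $P\cap D=\emptyset$. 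This is exactly the compactness argument used, e.g., in the proof of the Alexander subbase lemma.

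Finally I would apply the claim twice. Applied to the disjoint nonempty closed sets $A$ and $B$ (taking $Q=B$ and $P=A$) it yields a clopen $D_B\in\sigma(\sD)$ with $B\subseteq D_B$ and $A\cap D_B=\emptyset$. Now $D_B$ is clopen, hence closed, and disjoint from the nonempty closed set $A$, so a second application of the claim (this time with $Q=A$ and $P=D_B$) yields a clopen $D_A\in\sigma(\sD)$ with $A\subseteq D_A$ and $D_A\cap D_B=\emptyset$. Then $D_A,D_B\in\sigma(\sD)$ are disjoint and contain $A$ and $B$ respectively, which is what is required.

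The only genuine subtlety — and the reason one cannot simply take $D_A=X\setminus D_B$ — is that $\sigma(\sD)$ is only assumed to be closed under finite unions and intersections, not under complementation; this is precisely why the separation claim must be invoked a second time rather than just complementing $D_B$. Everything else is routine.
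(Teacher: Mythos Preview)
Your proof is correct and is precisely the ``standard compactness argument'' the paper alludes to without writing out; the paper gives no detailed proof of this lemma, so there is nothing further to compare. Your observation that one must invoke the separation claim twice (rather than complement $D_B$) because $\sigma(\sD)$ need not be closed under complementation is a genuine point worth making explicit.
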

\mkp

\bexm
A specific example where Lemma \ref{lema00} applies is given when $\sA$ separates points in $X$, where $\sD=Z(\sA)$.
\eexm
\mkp

Next proposition shows that the notions of separating and strongly separating points are equivalent for controllable subgroups.

\begin{prop}\label{prp01} If $\sA$ is a controllable subgroup of $C(X,G)$ that separates the points of $X$,
then $\sA$ strongly separates the points of $X$.
\end{prop}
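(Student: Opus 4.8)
The goal is to upgrade "separates points" to "strongly separates points" using controllability. Fix $x_1 \neq x_2$ in $X$. First I would use the separation hypothesis to get $f \in \sA$ with $f(x_1) \neq e_G$ and $f(x_2) = e_G$; thus $x_1 \in \coz(f)$ and $x_2 \in Z(f)$. This already gives one of the two functions with the right cozero behavior at $x_1$, but I still need a second function whose cozero contains $x_2$ and is disjoint from a cozero set around $x_1$, which is exactly where controllability enters.

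\emph{Producing disjoint clopen separators.} The sets $\{x_1\}$ and $Z(f)$ are disjoint nonempty closed (in fact the singleton is closed and $Z(f)$ is clopen). By Lemma~\ref{lema00} applied to $\sD = Z(\sA)$ (which is a subbase for the closed sets since $\sA$ separates points — this is precisely the Example following Lemma~\ref{lema00}), there are disjoint $D_1, D_2 \in \sigma(Z(\sA))$ with $\{x_1\} \subseteq D_1$ and $Z(f) \subseteq D_2$. Now apply controllability to $f$ and this pair $D_1, D_2$: we obtain $U \in \sigma(\coz(\sA))$ and $g \in \sA$ with $D_1 \subseteq U \subseteq X \setminus D_2$, $g|_{D_1} = f|_{D_1}$, and $g|_{Z(f) \cup (X\setminus U)} \equiv e_G$. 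In particular $g(x_1) = f(x_1) \neq e_G$, so $x_1 \in \coz(g)$, and $\coz(g) \subseteq U$ (since $g$ vanishes off $U$), hence $\coz(g) \subseteq X \setminus D_2 \subseteq X \setminus Z(f) = \coz(f)$; but more usefully $\coz(g) \cap D_2 = \emptyset$, and since $x_2 \in Z(f) \subseteq D_2$, we get $x_2 \notin \coz(g)$.

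\emph{Getting the second function at $x_2$.} The single function $g$ only handles $x_1$; I now need to repeat the construction with the roles reversed to capture $x_2$ by a function whose cozero avoids a neighborhood of $x_1$. Run the symmetric argument: by the separation hypothesis there is $f' \in \sA$ with $f'(x_2) \neq e_G$, $f'(x_1) = e_G$, and then, applying Lemma~\ref{lema00} and controllability as above (with $D_1' \ni x_2$, $D_2' \supseteq Z(f')$, and $D_1' \cup \coz(g) \subseteq D_1'$ — here one must choose $D_2'$ large enough to also separate from $\coz(g)$, which is possible because $\coz(g)$ is a clopen set disjoint from $\{x_2\}$, so $\{x_2\}$ and $\coz(g) \cup Z(f')$ are disjoint closed sets and Lemma~\ref{lema00} still applies), we obtain $g' \in \sA$ with $x_2 \in \coz(g')$ and $\coz(g') \cap \coz(g) = \emptyset$. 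Setting $f_1 = g$, $f_2 = g'$ gives the desired pair: $x_1 \in \coz(f_1)$, $x_2 \in \coz(f_2)$, $\coz(f_1) \cap \coz(f_2) = \emptyset$.

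\emph{Main obstacle.} The delicate point is the bookkeeping of which disjoint closed sets to feed into Lemma~\ref{lema00} at each stage, and in particular ensuring in the second application that the clopen set $U'$ delivered by controllability is disjoint from $\coz(g)$, not merely from a neighborhood of $x_1$. This forces one to enlarge $D_2'$ to contain $\coz(g)$ before invoking Lemma~\ref{lema00}; the fact that $\coz(g)$ is clopen (because $G$ is discrete) and disjoint from $\{x_2\}$ is what makes this legitimate. Everything else is a routine chase through the definitions.
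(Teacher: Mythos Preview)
Your argument is correct and follows essentially the same two-step scheme as the paper (Lemma~\ref{lema00} plus controllability, applied once for each point, with an intermediate enlargement to force disjointness of the second cozero from the first); the only organizational difference is that the paper begins by separating the singletons $\{x_1\}$ and $\{x_2\}$ directly, whereas you separate each singleton from a larger zero set, which is a harmless variation. Note the typo ``$D_1' \cup \coz(g) \subseteq D_1'$'' in your second step---from the surrounding text you clearly mean $\coz(g)\subseteq D_2'$, and with that correction the argument goes through.
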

\begin{proof}
 Set $\sD=Z(\sA)$ and take two distinct elements $x_1\neq x_2$ in $X$. Applying Lemma \ref{lema00},
since  $\sA$ separates the points of $X$, there are $D_1, D_2\in\sigma(\sD)$ such that $x_1\in D_1$, $x_2\in D_2$ and $D_1\cap D_2=\emptyset$.
Take $f_i\in\sA$ such that $f_i(x_i)\neq e_G$, $i\in\{1,2\}$. Since $\sA$ is controllable,
we have $U_1\in \sigma(coz(\sA))$ and $g_1\in\sA$ such that $D_1\subseteq U_1\subseteq X\setminus D_{2}$, $g_1|_{D_1}=f_1|_{D_1}$ and
$Z(f_1)\cup (X\setminus U_1)\subseteq Z(g_1)$. Therefore $x_1\in coz(g_1)\subseteq U_1$.

Applying that $\sD$ is a subbase of closed subsets, and using a compactness argument, we deduce
that there is $D'_1\in \sigma(\sD)$ such that $U_1\subseteq D'_1$ and $D'_1\cap D_2=\emptyset$. By the controllability of
$\sA$ again,
we have $U_2\in \sigma(coz(\sA))$ and $g_2\in\sA$ such that $D_2\subseteq U_2\subseteq X\setminus D'_{1}$, $g_2|_{D_2}=f_2|_{D_2}$ and
$Z(f_2)\cup (X\setminus U_2)\subseteq Z(g_2)$. Therefore $x_2\in coz(g_2)\subseteq U_2$, which yields
$coz(g_1)\cap coz(g_2)=\emptyset$. This completes the proof.
\end{proof}
\mkp

%\section{Support subsets}

\bdfn Let $\sA$ be a subgroup of $C(X,G)$ and
let $\phi\colon\sA\to G$ a group homomorphism. A subset $A$ of $X$ is said to be a
\textit{support} for $\phi$ if given $f\in\sA$ with $A\subseteq Z(f)$, it holds that  $\phi(f)=e_G$.
\edfn

Some basic properties of support subsets are shown in the next proposition.
Observe that, since $A\subseteq \overline{A}^X\subseteq Z(f)$,
we may assume WLOG that all support subsets are closed and therefore compact subsets of $X$.
\mkp

\begin{prop}\label{propiedadessoporte}
Let $\phi\colon\sA\to G$ a non null group homomorphism. The following assertions hold :
\begin{enumerate}
\item $X$ is a support for $\phi$.
\item If $A$ is a support for $\phi$ then $A\neq\emptyset$.
\item If $A$ is a support for $\phi$ and $A\subseteq B$ then $B$ is a support for $\phi$.
\item Let $A$ be a support for $\phi$ and $f,g\in\sA$ such that $f|_A=g|_A$. Then $\phi(f)=\phi(g)$.
\end{enumerate}
\mkp

If, in addition, $\sA$ is controllable and separates points in $X$, then we have:

\begin{enumerate}
\item[(5)] Let $A$ and $B$ be supports for $\phi$, then $A\cap B\neq\emptyset$.
\end{enumerate}
\end{prop}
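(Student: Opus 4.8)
The plan is to argue by contradiction: assume $A\cap B=\emptyset$ and deduce that $\phi$ must be the null homomorphism, contradicting the standing hypothesis. By items (1)--(2) we already know $A$ and $B$ are non-empty, and by the remark preceding the proposition we may take them closed, hence compact. Since $\sA$ separates points in $X$, the family $Z(\sA)$ is a subbase for the closed sets of $X$, so Lemma \ref{lema00} applies to the disjoint non-empty closed sets $A,B$ and produces disjoint $D_A,D_B\in\sigma(Z(\sA))$ with $A\subseteq D_A$ and $B\subseteq D_B$.

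Next I would fix an arbitrary $f\in\sA$ and feed $f$ together with the pair $D_A,D_B$ into the controllability hypothesis. This yields $U\in\sigma(coz(\sA))$ and $g\in\sA$ with $D_A\subseteq U\subseteq X\setminus D_B$, $g|_{D_A}=f|_{D_A}$, and $g|_{Z(f)\cup(X\setminus U)}\equiv e_G$. Two consequences follow. First, on $D_A$ the map $fg^{-1}$ takes the value $e_G$ (because $f$ and $g$ agree there), so $A\subseteq D_A\subseteq Z(fg^{-1})$, and the support property of $A$ gives $\phi(f)\phi(g)^{-1}=\phi(fg^{-1})=e_G$, that is, $\phi(f)=\phi(g)$. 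Second, since $U\subseteq X\setminus D_B$ we have $B\subseteq D_B\subseteq X\setminus U\subseteq Z(g)$, so the support property of $B$ gives $\phi(g)=e_G$. Combining the two, $\phi(f)=e_G$; as $f\in\sA$ was arbitrary this means $\phi\equiv e_G$, contradicting non-nullity. Hence $A\cap B\neq\emptyset$.

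There is no genuinely hard step once the bookkeeping of the controllability axiom is in place; the only point requiring care is to check that $U$ separates $A$ from $B$ in the correct direction --- it must contain $D_A$ and miss $D_B$, which is exactly how controllability delivers it --- and then to read off that $g$ vanishes on the whole of $B$. Everything else rests on the homomorphism identity $\phi(fg^{-1})=\phi(f)\phi(g)^{-1}$ together with item (4) already established (applied to $fg^{-1}$ via item (4), or directly from the definition of support).
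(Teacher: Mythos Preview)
Your proof is correct and follows essentially the same route as the paper's: separate $A$ and $B$ by disjoint sets in $\sigma(Z(\sA))$ via Lemma~\ref{lema00}, invoke controllability to produce $g$ agreeing with $f$ on $D_A$ and vanishing off $U\subseteq X\setminus D_B$, then read off $\phi(f)=\phi(g)=e_G$ from the two support conditions. The only cosmetic difference is that the paper fixes at the outset a single $f$ with $\phi(f)\neq e_G$ and derives an immediate contradiction on that $f$, whereas you quantify over all $f$ and conclude $\phi$ is null; the underlying mechanism is identical.
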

\begin{proof}
\noindent Assertions $(1) - (4)$ are obvious.

\noindent (5) Let $A$ and $B$ be closed supports for $\phi$. Suppose $A\cap B=\emptyset$. Since $\sA$ separates  points in $X$,
by Lemma \ref{lema00} and Proposition \ref{prp01}, there are two disjoint subsets $D_A$ and $D_B$ in $\sigma(Z(\sA))$ containing $A$ and $B$, respectively.
Take $f\in\sA$ such that $\phi(f)\neq e_G$. Applying the controllability of $\sA$, we obtain
$U\in \sigma(coz(\sA))$ and $g\in\sA$ such that $A\subseteq D_A\subseteq U\subseteq X\setminus D_B\subseteq X\setminus B$,
$g|_{D_A}=f|_{D_A}$ and $Z(f)\cup(X\setminus U)\subseteq Z(g)$. This yields a contradiction as the evaluation of $\phi(g)$ shows.
Indeed, since  $g(x)=f(x)$ for all $x\in A$, by item (2) it follows that $\phi(g)=\phi(f)\neq e_G$. On the other hand, we have that
$g(x)=e_G$ for all $x\in B$, which imples $\phi(g)=e_G$. This contradiction completes the proof.
\end{proof}
%\mkp

%\section{Separating group homomorphisms}

\bdfn
A map $H\colon\sA\to\sB$ is said to be \emph{separating, or disjointness preserving},
if for each pair of maps $f,g\in \sA$
satisfying that $f^{-1}(e_G)\cup g^{-1}(e_G)=X$,
it holds that $Hf^{-1}(e_G)\cup Hg^{-1}(e_G)=Y$
(equivalently, if $coz(f)\cap coz(g)=\emptyset$ implies $coz(Hf)\cap coz(Hg)=\emptyset$ for all $f,g\in\sA$).
In case $H$ is bijective, the map $H$ is said to be \emph{biseparating} if both $H$ and $H^{-1}$ are separating.
Remark that this definition makes sense and extends naturally to maps $\phi\colon\sA\to G$.
\edfn
\mkp

Next we will see that every non null separating group homomorphism $\phi\colon\sA\to G$, where
 $\sA$ is controllable, has the smallest possible compact support set.
  For that purpose, set $$\sS=\{A\subseteq X: A\text{ is a compact support for }\phi \}.$$
 There is a canonical partial order that can be defined on $\sS$: $A\leq B$, $A,B\in \sS$, if and only if
 $B\subseteq A$. A standard argument shows that $(\sS,\leq)$ is an inductive set and,
 by Zorn's lemma, $\sS$ contains a $\subseteq$-minimal element. Furthermore, this minimal element is in fact
 a minimum because of the next proposition. %assertion (5) in Proposition \ref{propiedadessoporte}.

\begin{prop}\label{1elemento}
The minimum element in $\sS$ consists of a single element.
\end{prop}

\begin{proof}Let $S$ be a minimal element of $\sS$, which is nonempty by Proposition \ref{propiedadessoporte}.
Suppose now that there are two different elements $x_1,x_2$ that are contained in $S$. As $X$ is Hausdorff,
we can select two disjoint open subsets $V_1,V_2$ in $X$ such that $x_1\in V_1$ and  $x_2\in V_2$.
Since $S$ is minimal, the compact subset $S\setminus V_i$ is not a support for $\phi$. Hence,  there are
$f_i\in\sA$ such  that $ S\setminus V_i\subseteq Z(f_i)$ and $\phi(f_i)\neq e_G$, $1\leq i\leq 2$.
Since $\phi$ is separating, it follows that $A=coz(f_1)\cap coz(f_2)$ is a nonempty compact subset of $X$.

We claim that $S\cap A=\emptyset$. Otherwise, pick up an element $a\in S\cap A$.
If $a\in V_1$ then $a\in S\setminus V_2$ and $a\in Z(f_2)$, which is a contradiction;
but if $a\notin V_1$ then $a\in S\setminus V_1$, which implies that $a\in Z(f_1)$ and we get a contradiction again.
Therefore $S\cap A=\emptyset$. By Lemma \ref{lema00}, we can take two disjoint sets $D_S,D_A\in\sigma(Z(\sA))$
such that $S\subseteq D_S$ and $A\subseteq D_A$. Applying that $\sA$ is controllable to $D_S$, $D_A$ and $f_1$,
we obtain a  set $U\in\sigma(coz(\sA))$ and a map $g\in\sA$ such that
$S\subseteq D_S\subseteq U\subseteq X\setminus D_A\subseteq X\setminus A$, $g|_S=f_1|_S$ and
$g|_{Z(f_1)\cup(X\setminus U)}\equiv e_G$ . Then $U\cap A=\emptyset$, $\phi(g)=\phi(f_1)\neq e_G$ and $A\subseteq Z(g)$.
Since $\phi$ is separating the set $B=coz (g)\cap coz(f_2)\neq\emptyset$. Take $b\in B$. Then $b\in coz(f_2)$ and
$b\in coz(g)\subseteq coz(f_1)$, that is, $b\in coz(f_1)\cap coz(f_2)=A$. As a consequence
$g(b)=e_G$, which is a contradiction.  Therefore we have proved that $|S|=1$. This completes the proof.

%Moreover, if $K$ is a support for $\phi$ then, by Proposition \ref{propiedadessoporte}, $S\subseteq K$.
%Thus the set $S$ is the minimum element in $\sS$ and this complete the proof.
\end{proof}
%\mkp

\section{Proof of main results}

Along this section $\sA$  (resp. $\sB$) is a controllable subgroup of $C(X,G)$
(resp. $C(Y,G)$) that separates points in $X$ (resp. $Y$).

Let $H\colon \sA\to\sB$ be a separating group homomorphism.
The maps $\delta_y\circ H$ are a separating group homomorphisms of $\sA$ into $G$ for all $y\in Y$.
Furthermore, since $\sA$ is controllable and separates points in $X$, we can apply Proposition \ref{1elemento},
in order to obtain that each partial ordered
set $\sS_y=\{A\subseteq X: S\text{ is a compact support for }\delta_y\circ H \}$ has a minimum element,
which is a singleton denoted by $h(y)$.
Therefore, by sending $y\in Y$ to $h(y)\in X$ for every $y\in Y$, we have defined the \textit{support map}
of $Y$ into $X$ that is associated to $H$.
%$$h\colon Y\to X\ :\ y\mapsto h(y).$$

%Observe that if $h(y)=h(y')$, $y,y'\in Y$, then $\sS_y=\sS_{y'}$.
\begin{prop}\label{propiedadesh}
Let $H\colon \sA\to\sB$ be a separating group homomorphism.
Then the support map $h$ has the following properties:
\begin{enumerate}
\item [1)] $h$ is continuous.
\item [2)] If $\emptyset\not=A\subsetneq X$ is open, $f\in\sA$ and $A\subseteq Z(f)$ then $h^{-1}(A)\subseteq Z(Hf)$.
\item [3)]  $h(coz(Hf))\subseteq coz(f)$.
\item [4)] If $H$ is one-to-one, then $h$ is onto.
\end{enumerate}
Moreover, when $H$ is a bijection of $\sA$ onto $\sB$, we have in addition:
\begin{enumerate}
%\item [5)] If $H$ is biseparating  then $h$ is one-to-one.
\item [5)] If $H$ is biseparating,  then $h$ is a homeomorphism of\, $Y$ onto $X$.
\end{enumerate}
\end{prop}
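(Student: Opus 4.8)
The whole argument pivots on one reformulation of the point $h(y)$. Fix $y\in Y$ and set $\phi_y=\delta_y\circ H$, a separating homomorphism $\sA\to G$; by construction $\{h(y)\}$ is a compact support of $\phi_y$, and by Proposition~\ref{propiedadessoporte}(5) (valid because $\sA$ is controllable and separates points) it meets every compact support of $\phi_y$. So the first step I would record is the characterization: \emph{for $A\subseteq X$, $A$ is a support for $\phi_y$ iff $h(y)\in A$} — ``$\Leftarrow$'' being Proposition~\ref{propiedadessoporte}(3) applied to $\{h(y)\}\subseteq A$, and ``$\Rightarrow$'' being the intersection property just mentioned. Items 2) and 3) then fall out at once: for 2), $y\in h^{-1}(A)$ gives $\{h(y)\}\subseteq A\subseteq Z(f)$, so $Hf(y)=\phi_y(f)=e_G$; for 3), $Hf(y)=\phi_y(f)\neq e_G$ forces $\{h(y)\}\not\subseteq Z(f)$, i.e.\ $h(y)\in coz(f)$. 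For 1) I would show $h^{-1}(C)$ is clopen for every clopen $C\subseteq X$, which suffices since $X$ is $0$-dimensional: by the characterization, $h(y)\in C$ iff $C$ is a support for $\phi_y$, i.e.\ iff $Hf(y)=e_G$ for every $f\in\sA$ with $C\subseteq Z(f)$, so $h^{-1}(C)=\bigcap\{\,Z(Hf):f\in\sA,\ C\subseteq Z(f)\,\}$ is an intersection of clopen sets (each $Z(Hf)$ is clopen because $G$ is discrete), hence closed; the same argument for $X\setminus C$ makes $h^{-1}(X\setminus C)$ closed, so $h^{-1}(C)=Y\setminus h^{-1}(X\setminus C)$ is open too, and continuity of $h$ follows because the clopen sets form a base of $X$.

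For 4), assume $H$ is injective and suppose, towards a contradiction, that $h(Y)\subsetneq X$. Then $h(Y)$ is compact, hence a proper closed subset of $X$, so we may pick $x_0\in X\setminus h(Y)$ and, since $\sA$ separates points, some $f_0\in\sA$ with $f_0(x_0)\neq e_G$. Using Lemma~\ref{lema00} with $\sD=Z(\sA)$ (a subbase for the closed sets of $X$, as in the Example above), choose disjoint $D_1,D_2\in\sigma(Z(\sA))$ with $x_0\in D_1$ and $h(Y)\subseteq D_2$, and apply controllability of $\sA$ to $f_0,D_1,D_2$ to obtain $U\in\sigma(coz(\sA))$ and $g\in\sA$ with $D_1\subseteq U\subseteq X\setminus D_2$, $g|_{D_1}=f_0|_{D_1}$ and $g|_{Z(f_0)\cup(X\setminus U)}\equiv e_G$. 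Then $g(x_0)=f_0(x_0)\neq e_G$, so $g$ is not the neutral element of $\sA$, whereas $coz(g)\subseteq U\subseteq X\setminus h(Y)$; by 3) this gives $coz(Hg)\subseteq h^{-1}(coz(g))=\emptyset$, so $Hg$ is the neutral element of $\sB$. Since $H$ carries the neutral element of $\sA$ to that of $\sB$, injectivity forces $g$ to be the neutral element of $\sA$, contradicting $g(x_0)\neq e_G$. Hence $h$ is onto.

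For 5), observe that biseparation makes $H^{-1}\colon\sB\to\sA$ a separating isomorphism between a controllable, point-separating pair, so the previous construction applies to it and produces a support map $k\colon X\to Y$, continuous by 1). Applying 3) to $H$ and to $H^{-1}$ gives, for every $g\in\sB$, $h(coz(g))\subseteq coz(H^{-1}g)$ and $k(coz(H^{-1}g))\subseteq coz(g)$, hence $(k\circ h)(coz(g))\subseteq coz(g)$; symmetrically $(h\circ k)(coz(f))\subseteq coz(f)$ for all $f\in\sA$. Now if $y_1\neq y_2$ in $Y$, point separation of $\sB$ provides $g\in\sB$ with $g(y_1)\neq e_G=g(y_2)$, so $y_1\in coz(g)$ forces $(k\circ h)(y_1)\in coz(g)$, hence $(k\circ h)(y_1)\neq y_2$; therefore $k\circ h=\mathrm{id}_Y$, and by the identical argument with $\sA$ in place of $\sB$, $h\circ k=\mathrm{id}_X$. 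Thus $h$ is a continuous bijection with continuous inverse $k$, i.e.\ a homeomorphism of\, $Y$ onto $X$.

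The step I expect to be the real obstacle is conceptual rather than computational: items 2) and 3) only bound $coz(Hf)$ from above by $h^{-1}(coz(f))$ and never from below, so nothing in them by itself forces $h$ to be surjective or the correspondence $f\mapsto Hf$ to be ``large''. The two places where genuinely new input is needed are exactly 4) — where injectivity together with the Lemma~\ref{lema00}/controllability construction manufactures a nontrivial function annihilated by $H$ — and 5), where running 3) simultaneously for $H$ and $H^{-1}$ and composing upgrades the one-sided containments to the two identities $k\circ h=\mathrm{id}$ and $h\circ k=\mathrm{id}$.
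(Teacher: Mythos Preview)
Your argument is correct. Items 2), 3) and 4) are essentially the paper's own proofs, only phrased more directly through your preliminary characterization ``$A$ is a support for $\phi_y$ iff $h(y)\in A$'' (the paper effectively re-derives this pointwise in the course of each item). The two places where your route diverges are 1) and 5). For continuity, the paper argues by nets: assuming $h(y_d)\to x\neq h(y)$, it produces disjoint neighbourhoods of $x$ and $h(y)$, extracts functions supported in each, and obtains a contradiction with the separating property. Your approach is more topological: using the characterization you write $h^{-1}(C)=\bigcap\{Z(Hf):C\subseteq Z(f)\}$ for every clopen $C$, which is closed, and symmetry in $C$ versus $X\setminus C$ makes it clopen; this is cleaner and exploits $0$-dimensionality explicitly. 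For 5), the paper argues injectivity of $h$ directly: given $h(y_1)=h(y_2)=x_0$ it uses controllability of $\sB$ to build $g_1,g_2\in\sA$ with $coz(Hg_1)\cap coz(Hg_2)=\emptyset$ but $Hg_i(y_i)\neq e_G$, so biseparation forces $coz(g_1)\cap coz(g_2)=\emptyset$ while item 3) places $x_0$ in both. You instead construct the support map $k$ for $H^{-1}$ and read off $k\circ h=\mathrm{id}_Y$, $h\circ k=\mathrm{id}_X$ from the inclusion $(k\circ h)(coz(g))\subseteq coz(g)$ together with point separation. Your route is more symmetric and in fact anticipates exactly what the paper does later in the proof of Theorem~1.1, where the map $k$ is built anyway and $k=h^{-1}$ is established by the same support-uniqueness reasoning; the paper's direct injectivity argument, on the other hand, avoids invoking the whole support-map machinery a second time and stays closer to the concrete controllability construction.
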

\begin{proof}

1) Let  $(y_d)_{d\in D}$ be a net in $Y$ converging to $y\in Y$. By a standard compactness argument, we may assume WLOG that
$(h(y_d))_{d}$ converges to $x\in X$.
Reasoning by contradiction, suppose $h(y)\neq x$. Since $X$ is Hausdorff, we can take two disjoint open
neighborhoods $V_{h(y)}$ and $V_x$ of $h(y)$ and $x$, respectively. Using convergence, there is $d_1\in D$
such that $h(y_d)\in V_x $ for all $d\geq d_1$.

As every support subset for $\delta_{y'}\circ H$ contains $h(y')$, for all $y'\in Y$, the subset
$X\setminus V_{h(y)}$ may not be a support for $\delta_y\circ H$. Therefore there exists $f\in\sA$ such that
$X\setminus V_{h(y)}\subseteq Z(f)$ and $Hf(y)\neq e_G$. Moreover, since $H(f)$ is a continuous function, the net
$(Hf(y_d))_d$ converges to $Hf(y)$ and, because $G$ is discrete, there is $d_2\geq d_1$ such that $Hf(y_d)\neq e_G$ for all $d\geq d_2$.
If we take and index $d_3\in D$ such that $d_3\geq d_2$, then the subset $X\setminus V_x$ may not be a support for
$\delta_{y_{d_3}}\circ H$. Thus, there exists $f_3\in\sA$ such that $X\setminus V_{x}\subseteq Z(f_3)$ and $Hf_3(y_{d_3})\neq e_G$.
This means that $y_{d_3}\in coz(Hf_3)\cap coz(Hf)$ and, since $H$ is a separating map, it follows that
$coz(f_3)\cap coz(f)\neq\emptyset$. But $coz(f_3)\subseteq V_x$, which is disjoint from  $coz(f)\subseteq V_{h(y)}$.
This is a contradiction that completes the proof.

\noindent
 2) Let $\emptyset\not=A\subsetneq X$ be an open subset, $f\in\sA$ and $A\subseteq Z(f)$.
If we take $y\in h^{-1}(A)$, then $X\setminus A$ is a nonempty compact subset that is not a support for $\delta_y\circ H$.
Then there is $g\in\sA$ such that $X\setminus A\subseteq Z(g)$ and $Hg(y)\neq e_G$. Therefore $coz(g)\subseteq A$ and
$coz(f)\subseteq X\setminus A$. Since $H$ is separating, we have that $coz(Hg)\cap coz(Hf)=\emptyset$.
Therefore $Hf(y)=e_G$.

\noindent
3) Take $x\in h(coz(Hf))$. Then $x=h(y)$ for some $y\in coz(Hf)$. Since $\{h(y)\}$ is a support for $\delta_y\circ H$,
it follows that $x\notin Z(f)$ or, equivalently, we have $x\in coz(f)$.

\noindent
4) Suppose $h(Y)\neq X$ and take $x\in X$ such that $x\notin h(Y)$. Since $h$ is continuous and $Y$ is compact, we have that
$h(Y)$ is a proper compact subset of $X$. Applying Lemma \ref{lema00}, there are two disjoint subsets $D_x, D_Y\in\sigma(Z(\sA))$
such that $x\in D_x$ and $h(Y)\subseteq D_Y$. Moreover, as $\sA$ separates points in $X$, there exists $f\in\sA$ such that
$f(x)\neq e_G$. Again, by the controllability of $\sA$, we may take a subset $U\subseteq \sigma(coz(\sA))$ and a map
$g\in \sA$ such that $x\in D_x\subseteq U\subseteq X\setminus D_Y\subseteq X\setminus h(Y)$, $g|_{D_x}=f|_{D_x}$ and
$Z(f)\cup(X\setminus U)\subseteq Z(g)$. As a consequence $g(x)=f(x)\neq e_G$, $h(Y)\subseteq Z(g)$ and $H(f)(y)=e_G$ for all $y\in Y$.
Then $Hf\equiv e_G$. Since $H$ is an injective group homomorphism, this yields $f\equiv e_G$, which is a contradiction.

\noindent
5) Since $X$ and $Y$ are compact spaces, it will suffice to prove that $h$ is one-to-one.
Suppose there are two elements $y_1\neq y_2$ in $Y$ such that $h(y_1)=h(y_2)=x_0$.
Since $\sB$ separates the points of $Y$, there are $D'_1,D'_2\in \sigma(Z(\sB))$ such that
$y_1\in D'_1$, $y_2\in D'_2$  and $D'_1\cap D'_2=\emptyset$.
Since $H(\sA)=\sB$, there is $f_i\in\sA$ such that $H(f_i)(y_i)\neq e_G$, for $i\in\{1,2\}$.
%Take now an element $x\in X$ such that $x\neq x_0$.  Set $\{x\}$ is not a support for
%$\delta_{y_i}\circ H$ then there is $f_i\in\sA$ such that $x\in Z(f_i)$ and $H(f_i)(y_i)\neq e_G$, for $i\in\{1,2\}$.
Since $\sB$ is controllable, there are $U_1\in\sigma(coz(\sB))$, $g_1\in\sA$
such that $y_1\in D'_1\subseteq U_1\subseteq Y\setminus D'_2$, $Hg_1|_{D'_1}=Hf_1|_{D'_1}$ and
$Hg_1|_{Z(Hf_1)\cup(Y\setminus U_1)}\equiv e_G$. Since $D'_2\cap U_1=\emptyset$, applying a compactness
argument, there is $D''_1\in\sigma(Z(\sA))$ such that $U_1\subseteq D''_1$ and $D'_2\cap D''_1=\emptyset$.
Now, by the controllability of $\sB$, there are
$U_2\in\sigma(coz(\sB))$ and $g_2\in\sA$ such that $y_2\in D'_2\subseteq U_2\subseteq Y\setminus D''_1$,
$Hg_2|_{D'_2}=Hf_2|_{D'_2}$ and $Hg_2|_{Z(Hf_2)\cup(Y\setminus U_2)}\equiv e_G$. Hence, since $coz(Hg_i)\subseteq U_i$,
$U_1\cap U_2=\emptyset$, and $H$ is biseparating, it follows that $coz(g_1)\cap coz(g_2)=\emptyset$.
On the other hand $Hg_i(y_i)=Hf_i(y_i)\neq e_G$, $i\in\{1,2\}$, and by item 3) above, we have that
$h(y_1)=h(y_2)=x_0\in coz(g_1)\cap coz(g_2)$, which is a contradiction.
%\noindent
%6) This is clear.
\end{proof}
\mkp

We have just seen how a separating group homomorphism $H$ has associated a continuous map $h$ that assigns to each point
$y\in Y$  the support subset of $\delta_y\circ H$. Our next goal now is to obtain a complete representation of $H$
by means of the support map $h$. Having this in mind, set $$G_{h(y)}\defi Im (\delta_{h(y)})=\{f(h(y):f\in\sA\}$$
which is a subgroup of $G$ for all $y\in Y$,
and denote by $\hbox{Hom}(G_{h(y)},G_y)$ the set of all group homomorphisms on $G_{h(y)}$ into $G_y$.
%equipped with the pointwise convergence topology.
Consider now the set
$$\sG\defi \bigcup\limits_{y\in Y}\hbox{Hom} (G_{h(y)},G_y).$$

We can think of the elements of $\sG$ as partial functions on $G$.
That is, functions $\ga : \hbox{Dom}(\ga)\subseteq G\longrightarrow G$
whose domain is a (not necessarily proper) subset of $G$. Since the group $G$ is discrete,
we can equip $\sG$ with the product (or pointwise convergence) topology as follows:

Let $[\ga;g_1,\dots, g_n]\defi \{\gb\in G^G : \ga(g_i)=\gb(g_i),\ g_i\in G,\ 1\leq i\leq n \}$
be a basic neighborhood of a map $\ga\in G^G$.
If now $\ga$ is a partial map, we can restrict this basic neighborhood to $\sG$ by letting $[\ga;g_1,\dots, g_n]$ be the set of all partial maps
$\gb : \hbox{Dom}(\gb)\subseteq G\longrightarrow G$ such that $g_1,\dots, g_n\in \hbox{Dom}(\gb)$ and $\ga(g_i)=\gb(g_i),\ 1\leq i\leq n$.
It is easily verified that this procedure extends the pointwise convergence topology on $\sG$ (cf. \cite{Ore_Tsa:aim}).

With this notation, we define  $\omega\colon Y\to \sG$ by
%$$\omega[y]\colon G_{h(y)}\to G_{h(y)}\text{ by }
$$\omega[y](f(h(y)))\defi Hf(y)$$
for each $y\in Y$. We shall see next that $\omega$ is well defined and continuous.

\begin{prop}\label{prp:peso}
With the terminology established above, the following assertions are true:
\begin{enumerate}
\item $\omega[y]$ is a well defined group homomorphism of $G_{h(y)}$ into $G_y$ for all $y\in Y$.
\item $\omega$ is continuous when $\sG$ is equipped with the pointwise convergence topology.
\end{enumerate}
\end{prop}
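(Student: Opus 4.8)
The plan is to treat the two assertions separately: (1) is essentially a matter of unwinding the definition of $\omega[y]$ against the defining property of $h(y)$, and (2) is a net argument resting on the continuity of the support map.

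For (1), I would fix $y\in Y$ and recall that, by construction, $\{h(y)\}$ is the minimum compact support of the separating homomorphism $\delta_y\circ H\colon\sA\to G$. Hence if $f_1,f_2\in\sA$ satisfy $f_1(h(y))=f_2(h(y))$, i.e. $f_1|_{\{h(y)\}}=f_2|_{\{h(y)\}}$, then Proposition \ref{propiedadessoporte}(4), applied to the support $\{h(y)\}$ of $\delta_y\circ H$, gives $(\delta_y\circ H)(f_1)=(\delta_y\circ H)(f_2)$, that is $Hf_1(y)=Hf_2(y)$. This is exactly the statement that the rule $\omega[y](f(h(y)))\defi Hf(y)$ is independent of the representative $f\in\sA$ of a given element of $G_{h(y)}=\{f(h(y)):f\in\sA\}$, so $\omega[y]$ is a well-defined map on $G_{h(y)}$. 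That it is a homomorphism into $G_y$ is then immediate: writing $a=f_1(h(y))$, $b=f_2(h(y))$ with $f_1,f_2\in\sA$, one has $ab=(f_1f_2)(h(y))$ with $f_1f_2\in\sA$ (the operation in $\sA\subseteq C(X,G)$ being pointwise), and since $H$ is a homomorphism and the operation in $\sB\subseteq C(Y,G)$ is likewise pointwise,
\[
\omega[y](ab)=H(f_1f_2)(y)=\bigl(Hf_1\cdot Hf_2\bigr)(y)=Hf_1(y)\,Hf_2(y)=\omega[y](a)\,\omega[y](b).
\]
Finally $\omega[y](f(h(y)))=Hf(y)$ lies in $G_y=\{g(y):g\in\sB\}$ because $Hf\in\sB$, so $\omega[y]\in\hbox{Hom}(G_{h(y)},G_y)\subseteq\sG$.

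For (2), I would take a net $(y_d)_{d\in D}$ in $Y$ with $y_d\to y$ and a basic neighborhood $[\omega[y];g_1,\dots,g_n]$ of $\omega[y]$ in $\sG$; by the definition of the topology on $\sG$ we may assume each $g_i\in\hbox{Dom}(\omega[y])=G_{h(y)}$, say $g_i=f_i(h(y))$ with $f_i\in\sA$. One then has to produce $d_0\in D$ such that, for all $d\geq d_0$, both $g_i\in\hbox{Dom}(\omega[y_d])=G_{h(y_d)}$ and $\omega[y_d](g_i)=\omega[y](g_i)$, for $1\leq i\leq n$. By Proposition \ref{propiedadesh}(1) the support map $h$ is continuous, so $h(y_d)\to h(y)$; since each $f_i$ is continuous and $G$ is discrete, there is $d_i'\in D$ with $f_i(h(y_d))=f_i(h(y))=g_i$ for all $d\geq d_i'$, which both exhibits $g_i\in G_{h(y_d)}$ and gives $\omega[y_d](g_i)=Hf_i(y_d)$ for such $d$. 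Likewise $Hf_i\in\sB$ is continuous and $G$ discrete, so there is $d_i''\in D$ with $Hf_i(y_d)=Hf_i(y)=\omega[y](g_i)$ for all $d\geq d_i''$. Choosing $d_0$ to be a common upper bound of $\{d_i',d_i'':1\leq i\leq n\}$ then completes the argument.

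The only genuinely delicate point is the bookkeeping of domains in (2): because the members of $\sG$ are partial maps with varying domains $G_{h(y_d)}$, it is not enough to know that $\omega[y_d]$ agrees with $\omega[y]$ on the test points $g_1,\dots,g_n$ — one must first guarantee that those points actually belong to $\hbox{Dom}(\omega[y_d])$ for $d$ large. This is precisely what the continuity of the support map $h$ together with the discreteness of $G$ provides; beyond that, everything reduces to the definitions and to the fact that $\{h(y)\}$ supports $\delta_y\circ H$.
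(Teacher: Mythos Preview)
Your proof is correct and follows essentially the same approach as the paper: for (1) you invoke Proposition~\ref{propiedadessoporte}(4) exactly as the paper does (and you spell out the homomorphism check that the paper leaves to the reader), and for (2) your net argument, using the continuity of $h$ from Proposition~\ref{propiedadesh}(1) together with the discreteness of $G$ to control both the domain membership $g_i\in G_{h(y_d)}$ and the values $\omega[y_d](g_i)$, is precisely the paper's argument, only phrased for a finite tuple $g_1,\dots,g_n$ at once rather than a single $g$.
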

\begin{proof}
$(1)$ In order to prove that $\omega[y]$ is well defined, take $f_1,f_2\in\sA$ such that $f_1(h(y))=f_2(h(y))$. By Proposition \ref{propiedadessoporte},
we have $\omega[y](f_1(h(y)))=Hf_1(y)=Hf_2(y)=\omega[y](f_2(h(y)))$. The verification that $\omega[y]$ is a group homomorphism
is easy and it is left to the reader.

$(2)$ Let $(y_d)_{d\in D}$ be a net converging to $y$ in $Y$. If $g$ is an arbitrary element in $\hbox{Dom}(\omega[y])=G_{h(y)}$,
then $g=f(h(y))\in G_{h(y)}$ for some $f\in\sA$. Since $h$ is continuous, $f\circ h\in C(Y,G)$, by Proposition \ref{propiedadesh},
and $(f\circ h)^{-1}(g)$ is a clopen neighborhood of $y\in Y$. Since $G$ is discrete, there is $d_1(g)\in D$ such that
$f(h(y_d))=g$ for all $d\geq d_1(g)$. Thus $g\in \hbox{Dom}(\omega[y_d])$ for all $d\geq d_1(g)$.
In like manner, as $\omega[y](g)=Hf(y)=g_y\in G$ and $Hf\in C(Y,G)$, we have that $(H f)^{-1}(g_y)$ is a clopen neighborhood of $y$.
As a consequence there is $d_2\geq d_1(g)$ such that $Hf(y_d)=g_y$ for all $d\geq d_2$.
Thus $\omega[y_d](g)=Hf(y_d)=g_y=Hf(y)=\omega[y](g)$ for all $d\geq d_2$. This means that
the net $(\omega[y_d])_{d\in D}$ converges to $\omega[y]$ in the pointwise convergence topology over $\sG$.
%But since $G$ is discrete compact sets in $G$ are finite, then the pointwise convergence topology coincides with the compact open  topology.
\end{proof}
\mkp

Observe that, since $G$ is discrete, the compact subsets in $G$ are all finite. Therefore, we have also proved that $\omega$ is also continuous
if we consider the compact open topology on $\sG$.
We are in position now of establishing a main result in this paper.

\bthm\label{th:3.1}
Let $H\colon\sA \to\sB$ be a separating group homomorphism. %where $\sA$ is controllable and $coz(\sA)$ covers $X$.
Then there are continuous maps $$h\colon Y\to X$$ and $$\omega\colon Y\to \bigcup\limits_{y\in Y}\emph{Hom} (G_{h(y)},G_y)$$
satisfying the following properties:
\begin{enumerate}
\item For each $y\in Y$ and every $f\in \sA$ it holds $$Hf(y)=\omega[y](f(h(y))).$$
\item $H$ is continuous with respect to the pointwise convergence topology.
\item  $H$ is continuous with respect to the compact open topology.
\item If $H$ is biseparating bijection of $\sA$ onto $\sB$, then $h$ is a homeomorphism.
\end{enumerate}
\ethm
\begin{proof}
Item $(1)$ is consequence of the definition of $\omega$ and (2) follows from assertion (2) in Proposition \ref{prp:peso}.
Thus, only $(3)$ needs to be verified.

 Let $(f_d)_{d\in D}\in\sA$ be a net converging to $e_G$ in the compact open topology. If $K$ is a compact subset of $Y$,
 then $h(K)$ is a compact subset in $X$ by the continuity of $h$. Therefore $(f_d)_d$ is eventually the constant function $e_G$ on $h(K)$.
 Applying (1), it follows that $(Hf_d)_{d\in D}$ is eventually $e_G$ on $K$, which completes the proof.
\end{proof}

\begin{cor}\label{cor:3.2}
Let $H\colon\sA \to\sB$ be a separating group homomorphism, where %$\sA$ is controllable, $coz(\sA)$ covers $X$, and
$\sA$ is pointwise dense. %$\delta_x$ is onto for all $x\in X$.
Then there are continuous maps $h\colon Y\to X$ and $\omega\colon Y\to \E nd (G)$ satisfying the following properties:
\begin{enumerate}
\item For each $y\in Y$ and every $f\in \sA$ it holds $$Hf(y)=\omega[y](f(h(y))).$$
\item $H$ is continuous with respect to the pointwise convergence topology.
\item  $H$ is continuous with respect to the compact open topology.
\item If $H$ is a biseparating bijection of $\sA$ onto $\sB$, then $h$ is a homeomorphism.
\end{enumerate}
\end{cor}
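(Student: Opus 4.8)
The plan is to obtain the corollary as an immediate specialization of Theorem \ref{th:3.1}, the only additional hypothesis being that $\sA$ is pointwise dense. First I would record the following elementary observation: since $G$ carries the discrete topology, its only dense subgroup is $G$ itself. Hence pointwise density of $\sA$ forces
\[
G_{h(y)}=\overline{\delta_{h(y)}(\sA)}^{\,G}=G\qquad\text{for every }y\in Y,
\]
so that $\hbox{Hom}(G_{h(y)},G_y)=\hbox{Hom}(G,G_y)\subseteq\hbox{Hom}(G,G)=\E nd(G)$ for all $y\in Y$. In particular the map $\omega\colon Y\to\sG=\bigcup_{y\in Y}\hbox{Hom}(G_{h(y)},G_y)$ constructed in Proposition \ref{prp:peso} takes all of its values in $\E nd(G)$, and we may regard it as a map $\omega\colon Y\to\E nd(G)$.

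Second, I would check that the topologies are compatible. Under the identification above, $\sG$ is now literally a subset of $G^G$, and the topology put on $\sG$ in the paragraph preceding Proposition \ref{prp:peso} is, by construction, the subspace topology induced from the product topology on $G^G$ (the ``partial maps'' subtlety disappears once every domain equals $G$). Restricted to $\E nd(G)\subseteq G^G$ this is precisely the pointwise convergence topology on $\E nd(G)$, so the continuity of $\omega\colon Y\to\sG$ provided by Proposition \ref{prp:peso}(2) gives continuity of $\omega\colon Y\to\E nd(G)$. Continuity of $h$ is Proposition \ref{propiedadesh}(1).

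Finally, items (1)--(4) of the corollary are exactly items (1)--(4) of Theorem \ref{th:3.1} transported along this identification: the representation formula $Hf(y)=\omega[y](f(h(y)))$ is unchanged; continuity of $H$ for the pointwise convergence topology and for the compact open topology is inherited verbatim (and one may additionally note, as in the remark after Proposition \ref{prp:peso}, that on $\E nd(G)$ these two topologies coincide, since $G$ is discrete and hence all of its compact subsets are finite); and the homeomorphism statement in (4) is Proposition \ref{propiedadesh}(5). There is no genuine obstacle here; the only points deserving a line of justification are that ``dense in a discrete group'' means ``equal to the group'' and that the ad hoc topology on the space of partial homomorphisms restricts to the usual product topology on $\E nd(G)$ once all domains are full.
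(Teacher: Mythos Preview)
Your proposal is correct and matches the paper's intended derivation: the corollary is stated without proof because it follows immediately from Theorem~\ref{th:3.1} once one notes that pointwise density of $\sA$ in the discrete group $G$ forces $G_{h(y)}=G$ for every $y\in Y$, so that $\omega$ takes values in $\E nd(G)$. Your additional remarks on the compatibility of topologies are accurate and make explicit exactly the routine verifications the paper leaves to the reader.
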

\mkp

We are in now position of establishing the results formulated at the Introduction.
\mkp

\begin{proof}[Proof of Theorem 1.1]\ After Theorem \ref{th:3.1} and Corollary \ref{cor:3.2}, we only need to
verify that $\omega[y]\in \A ut(G)$ for all $y\in Y$. Applying Theorem \ref{th:3.1} to $H^{-1}$, we obtain
maps $$\rho\colon X\to \E nd(G)$$ and $$k\colon X\to Y$$
such that for every $x\in X$ and $g\in \sB$, we have  $$H^{-1}g(x)=\rho[x](g(k(x))).$$

Thus, for every $f\in\sA$ and $x\in X$, we have
$$f(x)=H^{-1}\circ (Hf)(x)=\rho[x](Hf(k(x)))=\rho[x](\omega[k(x)](f(h(k(x)))))$$

\noindent which means that the support subset of $\delta_{x}\circ (H^{-1}\circ H)$ is both $x$ and $h(k(x))$.
Since the support of a separating map is unique, this means that $h\circ k=id_X$, which implies
that $k$ is a right inverse of $h$. Analogously, for every $g\in\sB$ and $y\in Y$, we have
$$g(y)=H\circ (H^{-1}g)(y)=\omega[y](H^{-1}(g(h(y)))=\omega[y](\rho[h(y)](g(k(h(y)))))$$

\noindent which means that the support subset of $\delta_y\circ (H\circ H^{-1})$ is both $y$ and $k(h(y))$.
Again, this implies that $k$ is a left inverse of $h$. Being $k$ both left and right inverse of $h$
yields that $k=h^{-1}$. Therefore

$$f(x)=H^{-1}\circ (Hf)(x)=\rho[x](Hf(k(x)))=\rho[x](\omega[k(x)](f(x)))$$
and
$$g(y)=H\circ (H^{-1}g)(y)=\omega[y](H^{-1}(g(h(y)))=\omega[y](\rho[h(y)](g(y))).$$

Applying the former equality to $x=h(y)$, it follows that $\rho[h(y)]\circ\omega[y]=id_G$ for all $y\in Y$,
and from the latter, we also have that $\omega[y]\circ\rho[h(y)]=id_G$.
This means that $\omega[y]$ has left and right inverse and, therefore, it is an automorphism on $G$,
which completes the proof.
\end{proof}
\mkp

\begin{proof}[Proof of Corollary 1.2]\
It follows directly from Theorem 1.1.
\end{proof}

\end{document}